\documentclass{article}
\usepackage{comment}
\usepackage{graphicx} 
\usepackage{xcolor} 
\usepackage[english]{babel}
\usepackage[utf8]{inputenc}
\usepackage[T1]{fontenc}
\usepackage{amsmath}
\usepackage{amssymb}
\usepackage{amsthm}
\usepackage{amsfonts}
\usepackage{tikz}
\usetikzlibrary{calc}
\usepackage{standalone}
\usetikzlibrary{decorations.pathreplacing}

\usepackage{xcolor}
\usepackage[verbose]{hyperref}
\hypersetup{colorlinks=false,allbordercolors=blue,pdfborderstyle={/S/U/W 1}}
\usepackage{standalone}
\usepackage{tikz}
\usepackage{comment}

\def\exp{\mathrm{exp}}

\def\|{\mid}

\begin{document}

\newtheorem{proposition}{Proposition}[section]
\newtheorem{question}[proposition]{Question}
\newtheorem{note}[proposition]{Note}
\newtheorem{theorem}[proposition]{Theorem}
\newtheorem{corollary}[proposition]{Corollary}
\newtheorem{lemma}[proposition]{Lemma}
\newtheorem{consequence}[proposition]{Consequence}
\theoremstyle{definition}
\newtheorem{definition}[proposition]{Definition}
\newtheorem{example}[proposition]{Example}
\newtheorem{remark}[proposition]{Remark}
\newtheorem{claim}[proposition]{Claim}
\newtheorem{construction}[proposition]{Construction}
\newtheorem{assumption}[proposition]{Assumption}

\markboth{Dominika Bure\v{s}ov\'{a}, Pavel Pt\'{a}k, Jan \v{S}evic}{Simultaneous extensions of states on quantum logics}

\author{Dominika Bure\v{s}ov\'{a}, Pavel Pt\'{a}k, Jan \v{S}evic}

\title{Simultaneous extensions of states on quantum logics}




\maketitle

\begin{abstract}
   Suppose that $L$ is a quantum logic (= an orthomodular poset) and
$A,B$ are Boolean subalgebras of $L$. Suppose further that
$s_1(\mathrm{resp.}\; s_2)$ is a state on $A$
(resp. on $B$). We ask when both $s_1$ and $s_2$
allow for a simultaneous extension over $L$.
Under a natural necessity condition on
$A,B,s_1,s_2$ we find a sufficient condition
for the existence of the extension.
This condition is the requirement of an abundance
of two-valued states on $L$. We then show
by the Greechie pasting technique that this condition is far from necessary even for state-space-rich logics.
\end{abstract}



\section{Introduction}

As conceptually adopted in the description of the event structure of a quantum experiment, the structure is assumed to be an orthomodular partially ordered set. The event structure is then called a \textit{quantum logic} (abbr. a logic, see e.g.~{\cite{GudderBook,PtakPulmannova, Svozil}, etc.).
The states of the experiment are then associated with probability measures on the logic. The hitherto effort to understand the behavior of states on logics brought a~series of results (see \cite{HornTarski, Kalmbach, Greechie, NavaraPtakRogalewicz, PtakExotic, Shultz}, etc.).
One of natural questions motivated by theoretical physics can roughly be posed as follows.
If we have two standard sub-experiments of a~quantum experiment and two states on them, when these two states can be "subordinated" to a~single state on the entire quantum experiment? Expressed in the formalism of quantum logics, if $A,B$ are Boolean subalgebras of a quantum logic $L$ and $s_1,s_2$ are states on $A,B$, is there a state $s$ on $L$ with $s(a)=s_1(a)$ for any $a\in A$ and $s(b)=s_2(b)$ for any $b\in B$?
In this note we address this question. Applying the results of~\cite{RaoRao}, we find a sufficient condition for a positive answer to this question. 
By using the Greechie paste job, we indicate potential generalizations for simultaneous extensions of states.

\section{Notions and results}

Let us first recall the notions we shall use in the sequel.

\begin{definition}
By a quantum logic (abbr.\ by a logic) we mean a $4$--tuple
$(L,\leq,{}',0,1)$, where $L$ is a partially ordered set with the relation
$\leq$ and with operation ${}' : L\to L$ such that $(a,b \in L)$

\begin{enumerate}
\item[(i)] $a\leq b \Rightarrow b'\leq a'$,

\item[(ii)] $a\vee a'=1$ and $a\wedge a'=0$,

\item[(iii)] $a''=a$,

\item[(iv)] if $a\leq b$, then
$
b=a\vee (b\wedge a').
$
\end{enumerate}
\end{definition} \bigskip

We shall need some more notions. If $A$ is a subset of $L$ such that
$A$ is a Boolean algebra with the
operations inherited from $L$,
then we call $A$ a \textit{Boolean sublogic} of $L$.

Let $s:L\to\langle 0,1\rangle$ be a mapping. Then $s$ is said to be a \textit{state} on $L$ if

\begin{enumerate}
\item $s(1)=1$,

\item $s(a\vee b)=s(a)+s(b)$ provided
      $a\leq b'$.
\end{enumerate}

Denote by $\mathcal{S}(L)$ (resp. $\mathcal{S}_2(L)$)  the set of all
states on $L$ (resp. the set of all $2$-valued states on $L$). As known, $\mathcal{S}(L)$ may be
quite small even for big logics (\cite{Greechie}).
Obviously, $\mathcal{S}(L)$ viewed as a subset of the product of $[0,1]$ is convex and compact.
The central notions of this paper
read as follows.

\begin{definition}

Let $L$ be a~logic and let $A$ and $B$
be Boolean subalgebras of $L$. Let
$s_1$ (resp. $s_2$) be a state on $A$
(resp. on $B$).

We say that $A,B,s_1,s_2$ are \textit{well conditioned} provided

\begin{enumerate}
\item if $a\in A$ and $b\in B$ and $a\ge b$,
      then $s_1(a)\ge s_2(b)$,

\item if $c\in A$ and $d\in B$ and $c\le d$,
      then $s_1(c)\le s_2(d)$.
\end{enumerate}
\end{definition}

Let us call $s_1$ and $s_2$ \textit{simultaneously
extensible} whenever there is a state $s$
on $L$ such that $s(a)=s_1(a)$ for any $a \in A$  and $s(b)=s_2(b)$ for any $b \in B$. Obviously, the requirement of being
well conditioned is a necessary condition
for the simultaneous extensibility. Also, if $a \in A \cap B$, then $s_1(a)=s_2(a)$.

Finally, let us say that $L$ allows for \textit{simultaneous extensions of states} if any well-conditioned quadruple $A,B,s_1,s_2$ of $L$ is simultaneously extensible. Let us first recall how the situation looks like in the fully ``standard'' setting
(i.e. in the case when $L$ is Boolean). The answer is given by the
following remarkable result presented
as Th. 3.6.1 in~\cite{RaoRao} (see also~\cite{HornTarski}).

\begin{theorem}
Let $L$ be a Boolean algebra. Then $L$ allows for simultaneous extensions of states.
\end{theorem}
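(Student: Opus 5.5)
We prove the Boolean case (Th.~3.6.1 of \cite{RaoRao}) by a Hahn--Banach / Horn--Tarski type argument. Let $L$ be a Boolean algebra, $A,B$ Boolean subalgebras, and $s_1,s_2$ well-conditioned states on $A,B$. The plan has three steps: first extend to the subalgebra $C$ generated by $A\cup B$, then extend from $C$ to $L$, and finally check positivity and finite additivity of the extension on $C$.

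For the extension from $C$ to $L$ we use the classical fact that every finitely additive probability on a subalgebra of a Boolean algebra extends to the whole algebra. This is proved by Zorn's lemma plus a one-element extension step, or via Hahn--Banach on the space of simple functions on the Stone space. So the real work is on $C$.

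Every element of $C$ is a finite disjoint join $\bigvee_i (a_i\wedge b_i)$ with $a_i\in A$ and $b_i\in B$. Passing to finite subalgebras generated by partitions, it suffices by compactness to construct a state on each finite $C_0=\langle A_0\cup B_0\rangle$ agreeing with $s_1$ on $A_0$ and $s_2$ on $B_0$. The compactness step goes as follows: the states on $L$ form a compact subset of $[0,1]^L$; the constraints indexed by finite subalgebras form closed sets with the finite intersection property once each finite case is solved and extended to $L$.

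So let $A_0$ have atoms $a_1,\dots,a_m$ and $B_0$ have atoms $b_1,\dots,b_n$. The atoms of $C_0$ are the nonzero $a_i\wedge b_j$. We need nonnegative numbers $x_{ij}$ with:
\begin{itemize}
\item $x_{ij}=0$ whenever $a_i\wedge b_j=0$;
\item row sums $\sum_j x_{ij}=s_1(a_i)$;
\item column sums $\sum_i x_{ij}=s_2(b_j)$.
\end{itemize}
This is a transportation (bipartite flow) problem. By the max-flow min-cut theorem, equivalently Hall's / Gale's supply--demand theorem, a solution exists iff for every set $I$ of rows
\[ s_1\Bigl(\bigvee_{i\in I} a_i\Bigr)\le s_2\Bigl(\bigvee\{b_j : a_i\wedge b_j\neq 0 \text{ for some } i\in I\}\Bigr), \]
together with equality of the total masses, both sides being $1$.

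This inequality is exactly where well-conditioning enters. Put $c=\bigvee_{i\in I}a_i\in A$ and let $d\in B$ be the join of the $b_j$ meeting $c$. Every $b_j$ disjoint from $c$ satisfies $b_j\le c'$. Hence $c\le d$, and condition~2 of well-conditioning gives $s_1(c)\le s_2(d)$. Condition~1 gives the symmetric statement, which is not actually needed here.

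The main obstacle I expect is the careful setup of the reduction from $C$ to the finite $C_0$ and the compactness passage, rather than the combinatorics. An alternative that avoids compactness is to define $s$ on $C$ directly. The natural candidate is
\[ s(c)=\inf\{\,s_1(a)+s_2(b)-1 : \ldots\,\}, \]
but verifying additivity that way is messier. I would therefore go with the finite flow argument followed by the compactness limit.
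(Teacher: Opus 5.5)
Your proof is correct. The paper gives no proof of this statement: it imports the result as Th.~3.6.1 of \cite{RaoRao} (cf.\ \cite{HornTarski}), where it is stated for charges on fields of sets. So your argument is a genuinely different, self-contained route.

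\textbf{What your route does.} You reduce to finite subalgebras $A_0\subseteq A$, $B_0\subseteq B$. You then solve a transportation problem on the atoms $a_i\wedge b_j$ using Gale's supply--demand (max-flow min-cut) criterion. The Hall-type inequality comes from well-conditioning via $c=\bigvee_j (c\wedge b_j)\le d$. Finally you extend to $L$ and pass to a limit in the compact space $\mathcal{S}(L)$.

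\textbf{What each approach buys.} The citation buys brevity. Your argument buys three things:
\begin{itemize}
\item It works directly in an abstract Boolean algebra, with no Stone representation. Beyond a finite-dimensional flow theorem, it uses only Tychonoff compactness and the existence of ultrafilters.
\item It shows that only condition 2 of well-conditioning is ever used. In fact condition 1 follows from condition 2 by complementation: $a\ge b$ gives $a'\le b'$, and $s(x')=1-s(x)$. So the paper's definition is redundant in any logic.
\item It locates exactly where Boolean structure enters: the common refinement of $A_0$ and $B_0$ by the meets $a_i\wedge b_j$. That refinement is not available when $A$ and $B$ are not jointly compatible, which is why the quantum case is harder.
\end{itemize}

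\textbf{Points to tighten.}
\begin{itemize}
\item The finite intersection property needs the pairs $(A_0,B_0)$ to form a directed family. This holds because a Boolean subalgebra generated by finitely many finite subalgebras is again finite.
\item Extending from the finite $C_0$ to $L$ needs no general extension theorem. For each atom $e$ of $C_0$, pick a two-valued state $u_e$ on $L$ with $u_e(e)=1$, and take $\sum_e t(e)\,u_e$.
\item The preliminary detour through the full subalgebra $C$ generated by $A\cup B$ is superfluous, since your compactness step runs directly in $\mathcal{S}(L)$. The unfinished infimum formula at the end can also be dropped.
\end{itemize}
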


The main objective of this paper is the generalization of this theorem to quantum logics. Before formulating our first result, let us recall the notion of a~set-representable logic.
\begin{definition}
    Let $Q$ be a~set and let $\mathcal{L}$ be a~collection of subsets of $Q$. Let us suppose that $\mathcal{L}$ fulfills the following properties:
    \begin{enumerate}
        \item $Q \in \mathcal{L}$,
        \item if $T, U \in \mathcal{L}$ and $T \cap U = \emptyset$, then $T \cup U \in \mathcal{L}$,
        \item if $T \in \mathcal{L}$, then $Q \setminus T \in \mathcal{L}$.
    \end{enumerate}
    Let us assume that $f \in L \to \mathcal{L}$ is such a~mapping that 
    \begin{enumerate}
        \item $f: L \to \mathcal{L}$ is a~set isomorphism,
        \item both $f$ and $f^{-1}$ are logic morphisms.
    \end{enumerate}
    Then $Q$ is said to be a~\textit{set-representable logic}. 
\end{definition}
Obviously, a~set-representable logic is a logic. The set-representable logics present an important class of logics (see e.g.,~\cite{BuresovaPtak, GudderPaper, PtakConcrete, Svozil}, etc.). It may be noted that any bounded lattice can be lattice-embedded in a~set-representable logic (see~\cite{Kalmbach, HardingNavara}).
Let us formulate our first result.

\begin{theorem}\label{Th1}
Let $L$ be a quantum logic and let $a,b \in L$. Let the state space of $L$ have the following property: If $a\not\leq b$, then there is a two-valued state $s\in \mathcal{S}(L)$ such that $s(a)=1$ and $s(b)=0$. Then

\begin{enumerate}
\item $L$ is a set-representable logic,

\item $L$ allows for simultaneous extensions of states.
\end{enumerate}

\end{theorem}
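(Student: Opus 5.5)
The plan is a Stone-type representation of $L$ by its two-valued states, followed by a transfer of the problem to a Boolean algebra of sets, where the Rao--Rao theorem quoted above (Th.~3.6.1 of~\cite{RaoRao}) applies. I read the hypothesis as holding for all $a,b\in L$.

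\emph{Part 1.} Put $Q=\mathcal{S}_2(L)$ and define $f(a)=\{s\in Q: s(a)=1\}$ for $a\in L$. I will check the following in order.
\begin{enumerate}
\item $f(1)=Q$, since $s(1)=1$ for every state.
\item $f(a')=Q\setminus f(a)$, since $s(a)+s(a')=1$ and $s$ takes only the values $0,1$.
\item If $a\le b'$, then $f(a)\cap f(b)=\emptyset$ and $f(a\vee b)=f(a)\cup f(b)$, by additivity of two-valued states.
\item $a\le b$ if and only if $f(a)\subseteq f(b)$. The forward direction is monotonicity of states. The converse is exactly the hypothesis: if $a\not\le b$, some $s\in Q$ has $s(a)=1$ and $s(b)=0$.
\end{enumerate}
By item 4, $f$ is injective and order-reflecting. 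Hence $\mathcal{L}=f(L)$ satisfies the three axioms of the definition of a set-representable logic, and $f$ is a morphism. For $f^{-1}$, note that $f(a)\cap f(b)=\emptyset$ means $f(a)\subseteq Q\setminus f(b)=f(b')$. This gives $a\le b'$, so disjointness in $\mathcal{L}$ reflects orthogonality in $L$. Thus $f^{-1}$ is also a logic morphism, and $L$ is set-representable.

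\emph{Part 2.} Let $\Sigma$ be the Boolean algebra of subsets of $Q$ generated by $\mathcal{L}$. Take a well-conditioned quadruple $A,B,s_1,s_2$. The first step is to show that $f(A)$ and $f(B)$ are Boolean subalgebras of $\Sigma$.
\begin{itemize}
\item Complements are preserved by item 2 above.
\item For $a,b\in A$, the Boolean structure of $A$ gives the orthogonal decompositions $a\vee b=a\vee(b\wedge a')$ and $b=(b\wedge a)\vee(b\wedge a')$.
\item Item 3 then yields $f(b\wedge a')=f(b)\setminus f(b\wedge a)$. Since $f(b\wedge a)\subseteq f(a)$, we get $f(a\vee b)=f(a)\cup f(b)$.
\item Meets follow by De Morgan.
\end{itemize}
Hence $f|_A$ and $f|_B$ are Boolean isomorphisms onto subalgebras of $\Sigma$.

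Next I transfer the data. Define $t_1=s_1\circ f^{-1}$ on $f(A)$ and $t_2=s_2\circ f^{-1}$ on $f(B)$; these are states. The quadruple $f(A),f(B),t_1,t_2$ is well conditioned in $\Sigma$: by item 4, $f(a)\supseteq f(b)$ holds exactly when $a\ge b$, so both inequalities of the definition transfer verbatim. The Rao--Rao theorem then gives a state $t$ on $\Sigma$ extending both $t_1$ and $t_2$. Finally set $s=t\circ f$. We have $s(1)=t(Q)=1$, and for $a\le b'$, $s(a\vee b)=t(f(a)\cup f(b))=s(a)+s(b)$ by disjointness. So $s$ is a state on $L$ agreeing with $s_1$ on $A$ and with $s_2$ on $B$.

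The step I expect to need the most care is verifying that $f$ maps Boolean subalgebras of $L$ onto Boolean subalgebras of $\Sigma$. For general joins and meets this is not automatic, since $f$ is only known to preserve orthogonal joins. It has to be argued through the orthogonal decompositions available inside the Boolean algebra $A$, as sketched above. Everything else is direct bookkeeping with item 4.
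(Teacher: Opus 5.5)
Your proposal is correct and follows essentially the same route as the paper: represent $L$ by $f(a)=\{s\in\mathcal{S}_2(L): s(a)=1\}$, observe that Boolean subalgebras of $L$ become families closed under set unions, intersections and complements, then apply the Rao--Rao theorem in a Boolean algebra of subsets and pull the common extension back along $f$. The differences are cosmetic. You work in the algebra $\Sigma$ generated by $f(L)$ rather than in the full power set $\exp(Q)$. You also verify explicitly (via orthogonal decompositions inside $A$) that $f(A)$ is closed under unions and that well-conditioning transfers, whereas the paper only asserts these points with a reference.
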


\begin{proof}
(1) This part can be easily obtained by mimicking the Stone representation theorem for Boolean algebras. Let $S=S_2(L)$
and
$$
\mathcal L=
\left\{A \mid A \subseteq S, A= \{s\in S\mid s(a)=1 \}~\text{for some }
a\in L
\right\}
$$

Then $\mathcal L$ is a~logic of subsets of $S$ provided the relation $\leq$ is the inclusion ordering in $S$ and the complementation $'$ in $\mathcal L$ is the set-orthocomplementation $(A'=S \setminus A)$.
It is a matter of a~simple Boolean-like verification to check that the mapping $f:L\to\mathcal L$ 
defined by
$
f(a)=\{\,s\in S_2(L)\mid s(a)=1\,\}
$
is a logic isomorphism. Hence $L$ is set-representable. We can therefore assume that $L=\mathcal L$, where $\mathcal L$ is a
collection of subsets of $S$.

It is important to realize that $A$ (resp.\ $B$) is a~Boolean subalgebra of $L$ precisely
when $A$ (resp.$B$) is closed under the formation in $L$ of unions, intersections and
complements (see e.g.,~\cite{PtakConcrete}.

Let $A,B,s_1,s_2$ be a~well conditioned quadruple in $L$. Let $\exp(S)$ be the collection of all subsets of $S$ end let us view $\exp(S)$ as a Boolean algebra. By the previous argument, $A$ and $B$ remain Boolean subalgebras of
$\exp(S)$. By Th.~\ref{Th1} there is an extension of $s_1$ and $s_2$ over $\exp(S)$. By restricting $s$ to $L$ we obtain the required extension of $s_1$ and $s_2$ over $L$. The proof is complete.
\end{proof}

The following consequence of Th.~\ref{Th1} is worthwhile formulating. It represents a~notable result in its own right. Let us call $L$ \textit{compatibility regular} if
$a\wedge b'=0 \iff a\leq b$. Recall that all compatibility regular logics enjoy the following
property that "algebraizes" the notion of compatibility : If $a,b\in L$, then $a,b$ are compatible (i.e. $a,b$ generates a Boolean subalgebra of $L$) in $L$ exactly when $a \land b$ exists in $L$.

\begin{theorem}\label{Th2}
If $L$ is compatibility regular, then $L$ allows for simultaneous extensions of states.
\end{theorem}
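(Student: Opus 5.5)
The plan is to reduce Theorem~\ref{Th2} to Theorem~\ref{Th1}. I will show that every compatibility regular logic $L$ has the two-valued separation property required there: whenever $a\not\leq b$, there is $s\in\mathcal{S}_2(L)$ with $s(a)=1$ and $s(b)=0$. Once this is established, part (2) of Theorem~\ref{Th1} gives the conclusion directly.

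Fix $a,b\in L$ with $a\not\le b$. By compatibility regularity, $a\wedge b'\neq 0$; here the meet exists in $L$, since compatibility regularity is formulated with $a\wedge b'$ taken in $L$. Put $c=a\wedge b'$, a nonzero element with $c\le a$ and $c\le b'$. It therefore suffices to find a two-valued state $s$ with $s(c)=1$. Then $s(a)\ge s(c)=1$, and $s(b')\ge s(c)=1$, so $s(b)=0$. This uses monotonicity of states, which follows from orthomodularity (iv): if $x\le y$ then $y=x\vee(y\wedge x')$ with $x\perp y\wedge x'$, so $s(y)=s(x)+s(y\wedge x')$.

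The key step is producing, for each nonzero $c$, a two-valued state concentrated at $c$. I would try a filter/ultrafilter construction. Take a maximal subset $F\subseteq L$ containing $c$ that is upward closed, does not contain $0$, and is closed under the existing meets of its members. Zorn's lemma applies, since unions of chains preserve these properties. Then define $s(x)=1$ iff $x\in F$. For this to be a state I need two facts:
\begin{enumerate}
\item[(a)] For every $x$, exactly one of $x,x'$ lies in $F$.
\item[(b)] If $x\perp y$, then $x\vee y\in F$ iff $x\in F$ or $y\in F$.
\end{enumerate}
Compatibility regularity is the tool here. If $x\notin F$, maximality should force some $f\in F$ with $f\wedge x=0$ (or with the meet not existing). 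Then regularity gives $f\le x'$, hence $x'\in F$. This is where the equivalence ``$f\wedge x=0\iff f\le x'$'' is essential.

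I expect the main obstacle to be precisely this maximality argument. In a non-lattice poset, meets of filter elements with $x$ may fail to exist, so the filter generated by $F\cup\{x\}$ must be defined carefully. One option is as the set of upper bounds of existing finite meets; another is to use the characterization that $f,x$ compatible iff $f\wedge x$ exists. If the direct ultrafilter approach stalls, I would fall back on the known result from the literature on compatibility regular (``regular'') logics that they are exactly those with a full (order-determining) set of two-valued states. From there, separation of $a\not\le b$ follows by the reduction above, and Theorem~\ref{Th1} completes the proof.
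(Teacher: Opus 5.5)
There is a genuine gap at the one nontrivial step. Your reduction to Theorem~\ref{Th1} is fine, with one small correction: if $a\not\le b$, compatibility regularity only gives a \emph{nonzero common lower bound} $c$ of $a$ and $b'$. The meet $a\wedge b'$ need not exist, but $c$ is all you use.

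The gap is the claim that every $c\neq 0$ carries a two-valued state $s$ with $s(c)=1$. For compatibility regular $L$ this is exactly the Navara--Pt\'{a}k theorem that the paper simply cites (compatibility regular implies set-representable, then Theorem~\ref{Th1} applies), and your sketch does not prove it. Your filters are upward closed, omit $0$, and are closed under existing meets. For such $F$, maximality does not give a single $f\in F$ with $f\wedge x=0$. The obstruction to adjoining $x$ may appear only after several meets and upward closures. In your parenthetical case, where $f\wedge x$ does not exist, regularity says nothing at all. Closure under existing meets also does not ensure that finitely many members of $F$ have a nonzero common lower bound, and that is what the argument really needs.

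Your fallback is the paper's route, but you state it as a characterization, and that is false. The set logic $\{\emptyset,\{1,2\},\{3,4\},\{1,3\},\{2,4\},\{1,2,3,4\}\}$ is set-representable. In it $\{1,2\}\wedge\{2,4\}'=\{1,2\}\wedge\{1,3\}=0$, although $\{1,2\}\not\le\{2,4\}$. Only the implication you need is true.

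The direct route works if you maximize a different property.
\begin{itemize}
\item Let $F\ni c$ be maximal (by Zorn; the property has finite character) such that every finite subset of $F$ has a nonzero common lower bound. Such an $F$ is upward closed, and so contains $1$.
\item Suppose $x,x'\notin F$. Choose $f_1,\dots,f_n\in F$ such that $f_1,\dots,f_n,x$ have no nonzero common lower bound, and $g_1,\dots,g_m\in F$ such that $g_1,\dots,g_m,x'$ have none. Let $z$ be a nonzero common lower bound of all $f_i,g_j$.
\item Any common lower bound of $z$ and $x$ is a common lower bound of $f_1,\dots,f_n,x$, hence is $0$. So $z\wedge x=0$, and regularity gives $z\le x'$. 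Then $z$ is a nonzero common lower bound of $g_1,\dots,g_m,x'$, a contradiction.
\item Hence exactly one of $x,x'$ lies in $F$; not both, since their only common lower bound is $0$.
\item Now let $x\perp y$. Then $x,y$ are not both in $F$, because $x\le y'$. If $x\vee y\in F$ but $x,y\notin F$, then $x\vee y,x',y'\in F$. Their common lower bounds lie below $(x\vee y)\wedge(x\vee y)'=0$, since $x'\wedge y'=(x\vee y)'$, again a contradiction.
\end{itemize}
So the characteristic function of $F$ is a two-valued state with value $1$ at $c$, and your reduction finishes the proof.
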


\begin{proof}
By the main result of \cite{NavaraPtak},
$L$ is set-representable and Theorem~\ref{Th2} applies.\\

\end{proof}

Let us discuss Th.~\ref{Th1} and its potential generalizations.
First, since any logic can be embedded in a~stateless logic~\cite{Greechie, NavaraPtakRogalewicz}, any logic can be embedded in a~logic that does not allow for simultaneous extensions of states.
As regards a~generalization of Theorem~\ref{Th1}, a~natural
limit is formulated in the following
observation. It disqualifies
several important logics like $L(H)$
(the logic of projections in a Hilbert
space $H$) for simultaneous extensions of states.

\begin{proposition}\label{Prop1}
Suppose that $a,b\in L$ and $a\not\leq b$.
Suppose that there is no $s\in \mathcal{S}(L)$ with
$s(a)=1$ and $s(b)=0$. Then $L$ does not allow for simultaneous extensions of states.
\end{proposition}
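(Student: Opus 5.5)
We argue by constructing a well-conditioned quadruple $A,B,s_1,s_2$ that admits no simultaneous extension. The natural choice is $A=\{0,a,a',1\}$ and $B=\{0,b,b',1\}$; both are Boolean subalgebras of $L$, since $a\wedge a'=0$ and $a\vee a'=1$. On them we put the two-valued states $s_1(a)=1$, $s_1(a')=0$ and $s_2(b)=0$, $s_2(b')=1$. These are plainly states on $A$ and $B$. If $s$ were a simultaneous extension, it would be a state on $L$ with $s(a)=1$ and $s(b)=0$, which the hypothesis rules out. So once we know the quadruple is well conditioned, $L$ fails to allow simultaneous extensions of states.

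Most of the work is checking well-conditionedness. Condition 1 asks: whenever $x\in A$, $y\in B$ and $x\ge y$, we need $s_1(x)\ge s_2(y)$. The only problematic case is $s_1(x)=0$ and $s_2(y)=1$, that is, $x\in\{0,a'\}$ and $y\in\{b',1\}$.
\begin{itemize}
\item If $x=0$, then $y=0$, which is impossible since $y\in\{b',1\}$.
\item If $x=a'$ and $y=1$, then $a'=1$, hence $a=0\le b$, contradicting $a\not\le b$.
\item If $x=a'$ and $y=b'$, then $a'\ge b'$, hence $a\le b$, again a contradiction.
\end{itemize}
Condition 2 asks: whenever $x\in A$, $y\in B$ and $x\le y$, we need $s_1(x)\le s_2(y)$. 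The bad case is $s_1(x)=1$ and $s_2(y)=0$, so $x\in\{a,1\}$ and $y\in\{0,b\}$.
\begin{itemize}
\item $x\le y$ would force $a\le b$ directly if $x=a$.
\item If $x=1$, it would force $y=1$, which is impossible for $y\in\{0,b\}$.
\end{itemize}
Degenerate coincidences such as $a=1$ or $b=0$ need no separate treatment, because the checks above only use $a\not\le b$ and the order-reversing involution $'$. The one thing to confirm is that $s_1$ and $s_2$ are well defined: this needs $a\ne a'$, which holds since $a\neq 0$ (as $a\not\le b$), and likewise for $b$.

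I expect no real obstacle. The only delicate point is the case analysis of condition 1, where we must use property (i) of a logic to turn $a'\ge b'$ into $a\le b$. The hypothesis on $\mathcal{S}(L)$ then enters only at the final step.
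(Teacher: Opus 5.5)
Your proof is correct and is essentially the paper's argument: the paper also takes the Boolean subalgebras generated by $a$ and by $b$, puts two-valued states with $s_1(a)=1$ and $s_2(b)=0$ on them, and simply asserts that the quadruple is well conditioned. You go further by verifying well-conditionedness explicitly and by covering the degenerate cases $a=1$ or $b=0$, where these subalgebras have only two elements rather than the four the paper mentions.
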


\begin{proof}
Take two $4$--element Boolean subalgebras $A$ and $B$ of $L$
such that $a\in A$ and $b\in B$. Suppose that $s_1\in S_2(A)$ and $s_2\in S_2(B)$ are such that
$s_1(a)=1$ and $s_2(b)=0$. Then $A,B,s_1,s_2$ are well conditioned in $L$ and there is no
state $s\in \mathcal{S}(L)$ that extends both $s_1$ and $s_2$.
\end{proof}

By the previous observation, a~natural relaxation of the condition of Th.~\ref{Th1} seems to be the property of being \textit{$0$--$1$ distinguishing}. Let us say that $L$ is $0$--$1$ distinguishing if for any $a,b\in L$ with $a\not\leq b$ there is a state $s\in \mathcal{S}(L)$ such that
$s(a)=1$ and $s(b)=0$ ($s$ is not required to be two-valued). 
Though we haven't been able to prove Theorem~\ref{Th1} for all the logics with the $0$--$1$ distinguishing state space, the following two results suggest it can be called a~conjecture. The first one concerns the case when $A=B$.

\begin{proposition}\label{Prop2}
Let $A$ be a Boolean subalgebra of a logic $L$. Let $L$ be $0$--$1$ distinguishing. Then any state $s\in \mathcal{S}(A)$ can be extended over $L$ as a~state.
\end{proposition}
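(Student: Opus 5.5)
The plan is to reduce to finite Boolean subalgebras of $A$, handle those by convexity, and then pass to all of $A$ by compactness of $\mathcal{S}(L)$. Of the $0$--$1$ distinguishing property we shall only use the special case $b=0$: for every $a\in L$ with $a\neq 0$ there is a state $\sigma\in\mathcal{S}(L)$ with $\sigma(a)=1$.

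\emph{Step 1 (finite subalgebras).} Let $F\subseteq A$ be a finite Boolean subalgebra with atoms $a_1,\dots,a_n$. Every state $t$ on $F$ is determined by its values on the atoms, and $\sum_i t(a_i)=1$. Hence $t=\sum_i t(a_i)\,t_i$, where $t_i$ is the two-valued state on $F$ concentrated at $a_i$.

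For each $i$ we have $a_i\not\leq 0$, so the property gives $\sigma_i\in\mathcal{S}(L)$ with $\sigma_i(a_i)=1$. We claim $\sigma_i$ extends $t_i$. Every $x\in F$ satisfies either $x\geq a_i$ or $x\leq a_i'$. If $x\geq a_i$, then $\sigma_i(x)\ge\sigma_i(a_i)=1$ by monotonicity of states. If $x\leq a_i'$, then $\sigma_i(x)\le\sigma_i(a_i')=0$. In both cases $\sigma_i(x)=t_i(x)$.

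Since $\mathcal{S}(L)$ is convex, $\sigma=\sum_i t(a_i)\sigma_i$ is a state on $L$, and it extends $t$. In particular, for our given $s\in\mathcal{S}(A)$, the restriction $s|_F$ extends to a state on $L$.

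\emph{Step 2 (compactness).} For each finite Boolean subalgebra $F$ of $A$ put
$$E_F=\{\sigma\in\mathcal{S}(L)\mid \sigma(x)=s(x)\text{ for all }x\in F\}.$$
Each $E_F$ is nonempty by Step 1. It is closed in the product topology on $[0,1]^L$, because it is cut out by finitely many coordinate conditions. As recalled earlier in the paper, $\mathcal{S}(L)$ is compact.

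The family $\{E_F\}$ has the finite intersection property. Indeed, given $F_1,\dots,F_k$, the Boolean subalgebra of $A$ generated by $F_1\cup\dots\cup F_k$ is finite, say $G$, and $E_G\subseteq E_{F_1}\cap\dots\cap E_{F_k}$. By compactness $\bigcap_F E_F\neq\emptyset$.

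Every $x\in A$ lies in some finite subalgebra, for instance $\{0,x,x',1\}$. Therefore any $\sigma\in\bigcap_F E_F$ agrees with $s$ on all of $A$, and this $\sigma$ is the required extension.

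\emph{Main point to check.} The only step needing care is the claim in Step 1 that a state $\sigma_i$ with $\sigma_i(a_i)=1$ automatically reproduces $t_i$ on $F$. This rests on two facts. First, $F$ is a Boolean subalgebra of $L$, so for an atom $a_i$ of $F$ every element of $F$ either contains $a_i$ or is orthogonal to it, with order and complement computed in $L$. Second, states on $L$ are monotone, since $x\le y$ gives $s(y)=s(x)+s(y\wedge x')$ by (iv). The closedness of $\mathcal{S}(L)$ in $[0,1]^L$ is routine, so Step 2 is standard once Step 1 is in place.
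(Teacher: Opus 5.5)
Your proof is correct and follows essentially the same route as the paper: extend $s$ on each finite subalgebra $F$ of $A$ by the convex combination $\sum_i s(a_i)\sigma_i$ over the atoms of $F$, then pass to all of $A$ by compactness of $\mathcal{S}(L)$. Your finite-intersection-property argument and your explicit check that $\sigma_i$ restricts to the point mass at $a_i$ make rigorous two steps the paper leaves implicit; in particular, the paper's ``directed limit'' really requires taking a cluster point of the net $(v_F)$.
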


\begin{proof}
Let $\mathcal F$ be the collection of all finite subalgebras of $A$. If
$F=\{0,1,a_1,\dots,a_n\}\in\mathcal F$,
then there is a state $v_F\in \mathcal{S}(L)$ such that $v_F$ restricted to $F$ equals to $s$.
Indeed, for each $a_i$ there is a state $t_i$ on $L$ with
$t_i(a_i)=1$ and we set $v_F=\sum_{i=1}^{n}s(a_i)t_i$.
Since $\sum_{i \leq n}s(a_i)=1$, we see that $v_F\in \mathcal{S}(L)$.

The collection of finite subalgebras of $A$ is a directed set under inclusion, so the compactness of $\mathcal{S}(L)$ gives us a state
$t$ that we obtain as a~directed limit $t=\lim\limits_{F\in\mathcal F}v_F$.
Obviously, $t$ extends $s$.
\end{proof}

The second example contributes to the orthomodular combinatorics at large. As a~byproduct, it generates a~variety of "almost set-representable orthomodular lattices" (see~\cite{Mayet, MayetPtak}).

\begin{proposition}
There is a finite (lattice) logic $L$ such that

\begin{enumerate}
\item $L$ is not set-representable,
\item $L$ is $0$--$1$ distinguishing,
\item $L$ allows for a simultaneous extension of states.
\end{enumerate}
\end{proposition}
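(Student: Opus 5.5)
I will construct $L$ by Greechie pasting of finitely many Boolean blocks in such a way that the two-valued states do not separate all pairs $a\not\le b$, while the full state space still does. A convenient choice of the pasting ingredient is a small loop-free configuration whose two-valued states are forced. One example is a finite Greechie diagram (of the kind used by Greechie to exhibit logics with few two-valued states) in which some atom $p$ must receive value $0$ under every two-valued state, yet a non-two-valued state, for instance the uniform state on blocks of equal size, gives $p$ a positive value. Since the diagram has no loops of order less than $5$, the resulting logic is an orthomodular lattice. I then glue several copies (or a symmetric extension) so that for every atom $p$ there is a state $s$ with $s(p)=1$. With that choice the construction is in hand, and the three properties are checked in turn.

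Property (1) is the easiest. If $L$ were set-representable, the points of the representation would give two-valued states separating $p$ from $0$, which is impossible when $p$ is annihilated by all two-valued states. For property (2), since $L$ is finite it suffices to check atoms. Here the key observation is that $a\not\le b$ in a finite OML implies that there is an atom $p\le a$ with $p\le b'$; this uses orthomodularity, because $a\wedge b'\ne0$ when $a,b$ are compatible, and otherwise it has to be verified directly from the diagram. Then any state $s$ with $s(p)=1$ satisfies $s(a)=1$ and $s(b)\le s(p')=0$. I would therefore aim to exhibit, for every atom $p$, an explicit state concentrated on $p$, that is, a weight function on atoms summing to $1$ on each block with $p\mapsto1$. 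This reduces to finding, in the hypergraph, a "partial $1$-cover" through $p$ using fractional weights, and I would construct these weights by hand using the symmetry of the diagram.

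Property (3) is the main obstacle. The plan is to reduce it to a linear feasibility problem. Since $L$ is finite, $\mathcal S(L)$ is a polytope determined by weights on atoms. A well-conditioned quadruple $A,B,s_1,s_2$ consists of finite Boolean subalgebras, each contained in a block or in a pasting of blocks, and the requirement is that the affine constraints $s|_A=s_1$, $s|_B=s_2$ are consistent with $\mathcal S(L)$. By Farkas' lemma, inconsistency would produce a linear combination of indicators witnessing a violation. I would show that such a witness must reduce to an inequality of the form $s(a)\ge s(b)$ for $a\in A$, $b\in B$ with $a\ge b$ (or the dual form), which well-conditioning excludes. The way I would make this manageable is to choose the diagram so that any two blocks meet in at most one atom and every state of a block extends. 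Then for $A,B$ lying in distinct blocks the problem splits: I take a common atom-level refinement of $A$ and $B$ via $\exp$ of the atoms, apply the Boolean result (the theorem quoted from~\cite{RaoRao}) to get a joint measure on atoms of $A\cup B$, and then extend it over the remaining atoms using the fractional freedom guaranteed in (2). The delicate case, where $A$ and $B$ share atoms or blocks interact through loops, I would handle by case analysis on the diagram. This is where I expect most of the work to lie.
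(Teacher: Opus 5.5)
Your argument for (1), via an atom $p$ that every two-valued state annihilates, is fine in principle. The rest of the plan has genuine gaps, and the sharpest is in (2). You reduce 0--1 distinguishing to single atoms using the claim that, in a finite OML, $a\not\le b$ yields an atom $p\le a$ with $p\le b'$. This claim is false. Take distinct non-orthogonal atoms $x,y$, for instance atoms of two different blocks of a Greechie diagram that share no block. Then $x\not\le y'$, but the only atom below $x$ is $x$, and $x\not\le y$.

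Worse, your claim amounts to $a\not\le b\Rightarrow a\wedge b'\neq 0$, i.e.\ compatibility regularity. By the Navara--Pt\'ak result used for Theorem~\ref{Th2}, that forces set-representability. So the claim must fail in any $L$ satisfying (1), and ``verify it from the diagram'' cannot rescue it.

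The correct reduction is this: $a\not\le b$ gives atoms $p\le a$ and $q\le b'$ with $p\not\perp q$, and you need a state with $s(p)=s(q)=1$. States concentrated on single atoms say nothing about such pairs, and the pairs are exactly the delicate point. In the paper, the witness for (1) is the non-orthogonal pair $a,b$: no two-valued state has $s(a)=s(b)=1$. The same pair is the critical case for (2), and it is handled by the explicit $\frac12$--$\frac12$ assignment on $c_i,d_i,e_i,f_i,g_i$. You also never specify $L$ (``several copies or a symmetric extension''), so nothing can actually be checked, and gluing can add new constraints.

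For (3), the splitting step is not sound. When $A,B$ are distinct blocks, a consistent assignment on the atoms of $A\cup B$ is trivial and needs no Boolean extension theorem. The whole difficulty is extending it to all of $L$, and the ``fractional freedom'' from (2) does not provide that, because other blocks can link atoms of $A$ and $B$. In the paper's diagram, take $A$ the block containing $a,m$, take $B$ the block containing $b,n$, and set $s_1(m)=s_2(n)=1$. The atom-level assignment is consistent, yet no state extends it, because $m$ and $n$ lie in the common block $\{m,c_1,c_2,n\}$. Only well-conditioning in $L$ excludes this case: $m\le n'$ while $s_1(m)=1>0=s_2(n')$. Your extension step never uses well-conditioning against the rest of the diagram, so it would ``extend'' this non-extensible pair.

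The claim that every Farkas witness reduces to a single inequality $s(a)\ge s(b)$ is just a restatement of (3), and you defer it to an unperformed case analysis. What is missing is a concrete diagram together with that case check. The paper supplies both, identifying the $m,n$ configuration above as the only dangerous case and noting that it is not well-conditioned.
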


\begin{proof}
We make use of the Greechie "paste job" (\cite{Greechie}). 
It became a~folklore in orthomodular structures.
Recall only that in the
diagram below the unbroken lines represent the maximal Boolean
subalgebras of $L$ (in our case these are $3$-atom and
$4$-atom Boolean subalgebras), and the subalgebras meet in
a~single atom as depicted in the figure. An important property of the logic $L$ defined by the
diagram is the determination of states on $L$. A state is
precisely an evaluation of the vertices of the diagram
that sums up to $1$ over any maximal Boolean segment of
the diagram.

\begin{tikzpicture}[
    dot/.style={circle, fill=black, inner sep=1.2pt},
    poly/.style={rounded corners=8pt},
    thick,
    >=stealth,
    xscale=1.1,
    yscale=1.2
]

\coordinate (a) at (0, 3.0);
\coordinate (b) at (10.0, 3.0);

\node[dot, label=left:$a$] at (a) {};
\node[dot, label=right:$b$] at (b) {};

\foreach \y [count=\i] in {5.0, 4.4, 3.8, 1.6, 1.0} {
    \node[dot] (CornerL\i) at (2.0, \y) {};
    \draw (a) -- node[dot] {} (CornerL\i);
}
\node[dot, label=90:$m$] (CornerL1) at (2.0, 5.0) {};
\node[dot, label=-90:$p$] (CornerL5) at (2.0, 1.0) {};

\foreach \y [count=\i] in {5.0, 4.4, 3.8, 1.6, 1.0} {
    \node[dot] (CornerR\i) at (8.0, \y) {};
    \draw (CornerR\i) -- node[dot] {} (b);
}
\node[dot, label=90:$n$] (CornerR1) at (8.0, 5.0) {};
\node[dot, label=-90:$q$] (CornerR5) at (8.0, 1.0) {};

\node[dot, label=90:$c_1$]  (c1) at (3.5, 5.0) {};
\node[dot, label=-45:$d_1$] (d1) at (3.5, 4.4) {};
\node[dot, label=-135:$g_1$](g1) at (3.5, 3.8) {};
\node[dot, label=45:$e_1$]  (e1) at (3.5, 1.6) {};
\node[dot, label=-90:$f_1$] (f1) at (3.5, 1.0) {};

\draw (CornerL1) -- (c1);
\draw (CornerL2) -- (d1);
\draw (CornerL3) -- (g1);
\draw (CornerL4) -- (e1);
\draw (CornerL5) -- (f1);

\draw (c1) -- node[dot] {} (d1);                                         
\draw (g1) -- node[dot] {} (e1);        

\draw[poly] (c1) -- (2.5, 4.5) -- node[dot, pos=0.5] {} (2.5, 1.5) -- (f1); 

\node[dot, label=90:$c_2$]  (c2) at (6.5, 5.0) {};
\node[dot, label=45:$d_2$]  (d2) at (6.5, 4.4) {}; 
\node[dot, label=-45:$g_2$] (g2) at (6.5, 3.8) {};
\node[dot, label=45:$e_2$]  (e2) at (6.5, 1.6) {};
\node[dot, label=-90:$f_2$] (f2) at (6.5, 1.0) {};

\draw (c1) -- (c2);
\draw (d1) -- (d2); 
\draw (g1) -- (g2);
\draw (e1) -- (e2);
\draw (f1) -- (f2);


\draw[poly] (c2) -- (5.7, 4.4) -- node[dot, pos=0.5] {} (5.7, 2.2) -- (e2); 
\draw[poly] (c2) -- (6.1, 4.7) -- node[dot, pos=0.7] {} (6.1, 4.1) -- (g2); 
\draw[poly] (d2) -- (7.2, 3.8) -- node[dot, pos=0.5] {} (7.2, 1.6) -- (f2); 

\draw (c2) -- (CornerR1);
\draw (d2) -- (CornerR2);
\draw (g2) -- (CornerR3);
\draw (e2) -- (CornerR4);
\draw (f2) -- (CornerR5);

\end{tikzpicture}

Observe first that the logic determined by the diagram is
not set-representable. Indeed, suppose that there is a
state $s\in S_2(L)$ such that
$s(a)=1$ and $s(b')=0$. Therefore, $s(b)=1$ and all vertices sharing an edge with $a$ or $b$ have a state value of $0$ in the state $s$.

Obviously, $s(m)+s(c_1) + s(c_2)+s(n)=s(c_1) + s(c_2)=1$.
Suppose that $s(c_1)=1$. Then $s(d_1)=0$ and $s(f_1)=0$. This means that $s(f_2)=1$
and therefore $s(d_2)=0$. We obtain that
$s(d_1)+s(d_2)=0$ which is a~contradiction.
Analogously, if we assume that $s(c_2)=1$, then $s(g_2)=0$ and $s(e_2)=0$. This means that $s(e_1)=1$ and therefore $s(g_1)=0$. Then 
$s(g_1)+s(g_2)=0$ and that is again a~contradiction.

It is easy to show that $L$
is $0$--$1$ distinguishing. The critical
case is again $s(a)=1,\qquad s(b')=0$,
otherwise we can even distinguish by a state of $S_2(L)$.
In the critical case we assign
$s(c_1)=\frac12$ and $s(c_2)=\frac12$.
Analogously for $d_1, d_2, e_1, e_2$, $f_1,f_2$, and $g_1,g_2$. Thus we obtain the required state in this way. 

Finally, the logic $L$ allows for simultaneous extensions of states. There
are many cases to be checked but they
are easy to verify. The only case that may jeopardize the verification is this:
$A$ is the block that contains $a,m$ and $B$ is the block that contains $b,n$, and $s_1(m)=1$ and $s_2(n)=1$. But then $A, B, s_1,s_2$ is not well-conditioned.
\end{proof}

\section*{Acknowledgements}
This research was supported by the GACR (project No. 25-20013L) and the SGS (project No. SGS26/073/OHK3/1T/13).

\textbf{Data availability statement.} The manuscript has no associated data.

\textbf{Conflict of interest statement.} The authors state that there is no conflict of interest. 

\section*{ORCID}
\noindent Dominika Burešová - \url{https://orcid.org/0000-0002-8760-5015}

\noindent Pavel Pták - \url{https://orcid.org/0000-0003-4949-8236}

\noindent Jan Ševic - \url{https://orcid.org/0009-0004-2313-7373}

\end{document}